\documentclass[12pt]{amsart} 

\usepackage[dvips]{graphics} 
\usepackage{color,amssymb,amsbsy,amsmath,amsfonts,amssymb,
amscd,epsfig,times,graphics,bm,enumerate}

\marginparwidth 0pt \oddsidemargin 0pt \evensidemargin 0pt 
\marginparsep 0pt \topmargin 0pt \textwidth  6.5in 
\textheight 8.5in 

\newtheorem{theo}{Theorem}
\newtheorem{lem}{Lemma}[section]
\newtheorem{rem}{Remark}[section]
\theoremstyle{definition}
\newtheorem{defi}{Definition}[section]
 
\numberwithin{equation}{section}

\newcommand{\C}{\mathbb C} 
\newcommand{\R}{\mathbb R}

\tolerance 10000 
\sloppy 
\begin{document}
  
\begin{abstract}
We give a description of complex geodesics and we study the structure of stationary discs in some non-convex domains for which complex geodesics are not unique.
\end{abstract} 

\title[Invariant holomorphic discs in some non-convex domains]{Invariant holomorphic discs in some non-convex domains} 

\author{Florian Bertrand and Herv\'e Gaussier}

\subjclass[2010]{32F45, 32Q45}
\keywords{Kobayashi metric, extremal disc, complex geodesic, stationary disc}
\thanks{Research of the two authors was partially supported by the CEDRE Grant 35398TK}
\maketitle 

\section{Introduction and Preliminaries}
The theory developed by L.Lempert in his seminal paper \cite{lem} offers a complete understanding of the complex geometry of bounded smooth strongly convex domains in $\mathbb C^n$: every such domain admits a singular foliation through any point by images of holomorphic discs centered at the point, these discs being infinitesimal extremal for the Kobayashi metric and stationary. Moreover, these discs are complex geodesics, are smooth up to the boundary, and are isolated. Some examples of strictly pseudoconvex non-convex domains with locally non-isolated extremal discs are presented in \cite{pa}. The aim of this article is to describe precisely (infinitesimal) extremal discs, complex geodesics (Theorem~\ref{theomain}), and to study the structure of stationary discs (Theorem~\ref{struct-theo}), for those  domains.

For $r>0$ we denote by $\Delta_r$ the disc centered at the origin of radius $r$ in $\C$ and let $\Delta=\Delta_1$ be the unit disc in $\C$.   
Let $\Omega \subset \C^n$ be a domain. The {\it Kobayashi pseudometric} $K_{\Omega}$ at $p\in \Omega$ and $v \in T_p\Omega$ is defined by
$$K_{\Omega}\left(p,v\right)=\inf 
\left\{r>0 \ | \  \exists \ f: \Delta \to \Omega \ \mbox{holomorphic}, f\left(0\right)=p, f'(0)=v/r\right\}$$
and the {\it Kobayashi pseudodistance} $d_{\Omega}$ is defined, for $p,q \in \Omega$, as its integrated pseudodistance
$$d_{\Omega}\left(p,q\right)=\inf\left\{\int_0^1 
K_{\Omega}\left(\gamma\left(t\right),\dot{\gamma}\left(t\right)\right)dt \ | \   
\gamma: [0,1]\rightarrow \Omega, \mbox{ }\gamma\left(0\right)=p, \gamma\left(1\right)=q\right\},$$
where the infimum is taken over all piecewise smooth $\mathcal C^1$ curves. When $d_{\Omega}$ is a distance, the domain $\Omega$ is called {\it Kobayashi hyperbolic}. Recall that in the case of the unit disc, then 
$K_\Delta$ and $d_\Delta$ are respectively the Poincar\'e metric and the Poincar\'e distance. We refer to \cite{kob} for the definitions and the main properties of the Kobayashi pseudometric, pseudodistance and of hyperbolic spaces.
Following \cite{ves, lem}, we may define complex geodesics and extremal discs as follows:
\begin{defi} Assume that $\Omega \subset \C^n$ is a Kobayashi hyperbolic domain and let $f: \Delta \rightarrow \Omega$ be a holomorphic map, also called holomorphic disc.  
\begin{enumerate}[(i)]
\item The disc $f$ is an {\it infinitesimal  extremal disc for the pair $(p,v) \in \Omega \times T_p\Omega$} if  $f(0)=p$, $f'(0)=\lambda v$ 
with $\lambda>0$ and if
 $g: \Delta \rightarrow \Omega$ is  holomorphic and such that $g(0)=p$, $g'(0)=\mu v$ with $\mu>0$, then $\mu\leq \lambda$.
\item The disc $f$ is an {\it infinitesimal complex geodesic} if $K_\Omega(f(\zeta),d_\zeta f(v_0)))=K_\Delta(\zeta,v_0)$ for all $\zeta \in \Delta$ and $v_0 \in \C$. 
 \item The disc $f$ is an {\it  extremal disc for the points $p,q \in \Omega$} if $f(0)=p$ and $f(\zeta)=q$ for some $\zeta \in \Delta$ and if  
 $g: \Delta \rightarrow \Omega$ is  holomorphic and such that $g(0)=p$ and $g(\zeta')=q$ then $|\zeta|\leq |\zeta'|$.

 \item The disc $f$ is a {\it complex geodesic} if $f$ is an isometry for the relative Kobayashi distances, namely 
 $d_\Omega(f(\zeta),f(\zeta'))=d_\Delta(\zeta,\zeta')$ for all $\zeta,\zeta' \in \Delta$.    
\end{enumerate}
\end{defi}
Note that in case the domain $\Omega$ is taut, then for any pair
$(p,v) \in \Omega \times T_p\Omega$   there exists an infinitesimal extremal disc for $(p,v)$. 
The question of the existence of complex geodesics is a difficult question. It was completely solved for bounded, smooth, strongly convex domains by L.Lempert in \cite{lem}. In case $\Omega$ is bounded and convex, according to to H.L.Royden and P.M.Wong \cite{ro-wo} (see also \cite{ab} Theorem 2.6.19), any (infinitesimal) extremal disc is a complex geodesic.
Such a domain being taut, this implies the existence of complex geodesics passing through any point in any direction in a bounded  convex domain. Moreover, in such domains $f$ is an infinitesimal complex geodesic if and only if $f$ is a complex geodesic 
(see \cite{ro-wo} and also \cite{ab} Corollary 2.6.20). 

We recall that a disc $f:\overline{\Delta} \to \C^n$, holomorphic in $\Delta$ and continuous up to $\partial \Delta$, is {\it attached} to a real hypersurface $M=\{\rho=0\} \subset \C^n$ if $f(\partial \Delta) \subset M$. 
Following \cite{lem}, such a disc is 
{\it stationary} for $M$ if there exists a continuous function $c:\partial \Delta\to\R\setminus \{0\}$ such that $\zeta \mapsto \zeta c(\zeta)\partial 
\rho(f(\zeta))$, defined on $\partial \Delta$, extends holomorphically on $\Delta$. Equivalently, following \cite{tum}, $f$ is {\it stationary} for $M$ if there exists a  holomorphic lift 
${\bm f}=(f,\tilde{f})$ of $f$ to the cotangent bundle $T^*\C^n$, continuous up to $\partial \Delta$ and such that ${\bm f}(\zeta)\in\mathcal{N}M(\zeta)$ for all $\zeta \in \partial \Delta$, where
\begin{equation}\label{eqco}
\mathcal{N}M(\zeta)=\{(z,\tilde{z}) \in \C^{2n}\  | \ z \in M, \tilde{z} \in \zeta N^*_z M\setminus \{0\} \}.
\end{equation}
Here $N^*_z M=span_{\R}\{\partial \rho(z)\}$ denotes the conormal fiber at $z$ of the hypersurface $M$. 

Finally, a set $X$ in a domain $\Omega \subset \C^n$ is a {\it holomorphic retract} if there exists a holomorphic map $r:\Omega \to \Omega$ such that 
$r(\Omega)\subset X$ and $r_{| X}=id_X$.

\vspace{2mm}
The following domain was introduced by N.Sibony. It is an example of a domain with non-isolated extremal discs, see \cite{pa} by M.-Y.Pang.    
Let $\rho$ be the real-valued function defined on $\C^2$ by
$$
\displaystyle \rho(z,w)=|z|^2+|w|^2-\Re e \left(\bar z ^4w^2\right)-1.
$$
We fix $\displaystyle 0<\varepsilon<\frac{1}{100}$ and we consider the domain $\Omega \subset \C^2$ defined by 
$$
\displaystyle \Omega=\{\rho<0\} \cap \left(\Delta_{1+\varepsilon} \times \Delta_{\frac{1}{4(1+\varepsilon)^3}}\right).
$$
\vspace{2mm}
One of the main purpose of the paper is to give a precise description of complex geodesics and of extremal discs contained in $\Omega$ and close to the disc $f^0: \Delta \to \Omega$ defined by 
$f^0(\zeta)=(\zeta,0)$. 

\vspace{2mm}
We observe the two following points:
\begin{enumerate}[(i)]
\item  The Levi form of $\rho$ at $(z,w) \in \mathbb C^2$ and $(Z,W) \in \mathbb C^2$ is given by 
\begin{eqnarray*}
\mathcal{L}\rho((z,w),(Z,W))&=&|Z|^2+|W|^2-8\Re e \left(\bar z ^3w \bar Z W\right)\\
\\
&=&  |Z-4\bar z ^3w W|^2+\left(1-16|z|^6|w|^2\right)|W|^2.
\end{eqnarray*}
For $\displaystyle |z|< 1+\varepsilon$ and $\displaystyle |w| < \frac{1}{4(1+\varepsilon)^3}$, the function $\rho$ is strictly plurisubharmonic. Therefore, the domain $\Omega$ is strongly pseudoconvex near 
 $\partial \Delta \times \{0\}$.  
\item The domain $\Omega$ is such that $\Omega \subset \Delta \times \C$. Indeed, if $(z,w) \in \Omega$ with $1<|z|$ then
$$|w|^2(1-|z|^4)\leq |w|^2- \Re e \left(\bar z ^4w^2\right)\leq 1-|z|^2$$
and thus 
  $$|w|^2(1+|z|^2) \geq 1$$
  which is not possible for $\displaystyle  (z,w) \in \Delta_{1+\varepsilon} \times  \Delta_{\frac{1}{4(1+\varepsilon)^3}}$.
 It follows that the set $\Delta \times \{0\}$ is a holomorphic retract of $\Omega$ and we denote by $\pi_1:\Omega \to \Delta$ the holomorphic projection defined by $\pi_1(z,w)=z$. 
\end{enumerate}

\section{Complex geodesics in $\Omega$}
Let $f:\Delta \to \Omega$ be a holomorphic disc of the form $f(\zeta)=(e^{i\theta}\zeta,f_2(\zeta))$, for some $\theta \in \R$. We first observe that by the decreasing property of the Kobayashi distance and metric, we have for $\zeta,\zeta' \in \Delta$ and $v \in \C$:
\begin{equation}\label{eqgeo}
d_{\Delta}(\zeta,\zeta')= d_{\Delta}(\pi_1(f(\zeta)),\pi_1(f(\zeta'))) \leq d_{\Omega}(f(\zeta),(f(\zeta')) \leq d_{\Delta}(\zeta,\zeta')
\end{equation}
and 
\begin{equation}\label{eqgeo2}
K_{\Delta}(\zeta,v)= K_{\Delta}(\pi_1(f(\zeta)),\pi_1(d_\zeta f v)) \leq K_{\Omega}(f(\zeta),d_\zeta f v) \leq K_{\Delta}(\zeta,v).
\end{equation}
This directly shows

\begin{lem}\label{lemgeo}
Any holomorphic disc $f:\Delta \to \Omega$ of the form $f(\zeta)=(e^{i\theta}\zeta,f_2(\zeta))$, for some $\theta \in \R$,  is a (infinitesimal) complex geodesic of $\Omega$.
\end{lem}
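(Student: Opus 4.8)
The plan is to show that both chains of inequalities \eqref{eqgeo} and \eqref{eqgeo2} are in fact chains of \emph{equalities}, which is precisely what the two relevant definitions require. The mechanism behind everything is that the composition $\pi_1 \circ f$ is a rotation of the disc: since $f(\zeta)=(e^{i\theta}\zeta, f_2(\zeta))$ and $\pi_1(z,w)=z$, we have $(\pi_1\circ f)(\zeta)=e^{i\theta}\zeta$, which is an automorphism of $\Delta$ and hence an isometry both for the Poincar\'e distance $d_\Delta$ and for the Poincar\'e metric $K_\Delta$.

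First I would justify the left-hand equalities in \eqref{eqgeo} and \eqref{eqgeo2}: because $\pi_1\circ f$ is a rotation, $d_\Delta(\pi_1(f(\zeta)),\pi_1(f(\zeta')))=d_\Delta(e^{i\theta}\zeta,e^{i\theta}\zeta')=d_\Delta(\zeta,\zeta')$, and likewise $K_\Delta(\pi_1(f(\zeta)),\pi_1(d_\zeta f\, v))=K_\Delta(e^{i\theta}\zeta,e^{i\theta}v)=K_\Delta(\zeta,v)$. Next I would invoke the distance-decreasing property of holomorphic maps twice: applied to the holomorphic retraction $\pi_1:\Omega\to\Delta$ it yields the first inequality $d_\Delta(\pi_1(f(\zeta)),\pi_1(f(\zeta')))\le d_\Omega(f(\zeta),f(\zeta'))$, while applied to the holomorphic disc $f:\Delta\to\Omega$ it yields the last inequality $d_\Omega(f(\zeta),f(\zeta'))\le d_\Delta(\zeta,\zeta')$; the same two applications give the corresponding bounds for $K$.

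Putting these together, the outermost terms of \eqref{eqgeo} coincide, so the sandwiched middle term is forced to equal them, giving $d_\Omega(f(\zeta),f(\zeta'))=d_\Delta(\zeta,\zeta')$ for all $\zeta,\zeta'\in\Delta$; this is exactly the definition of a complex geodesic (item (iv)). The identical pinching argument applied to \eqref{eqgeo2} gives $K_\Omega(f(\zeta),d_\zeta f\, v)=K_\Delta(\zeta,v)$ for all $\zeta\in\Delta$ and $v\in\C$, which is the definition of an infinitesimal complex geodesic (item (ii)). Hence $f$ is both, establishing the claim.

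Since the two inequality chains are already recorded in the excerpt, there is essentially no obstacle here; the only point requiring care is recognizing that the rotation invariance of $d_\Delta$ and $K_\Delta$ is exactly what makes the two extreme terms of each chain agree, so that the decreasing property alone forces the desired equalities.
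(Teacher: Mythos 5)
Your proof is correct and follows the paper's argument exactly: the paper establishes the same two inequality chains \eqref{eqgeo} and \eqref{eqgeo2} via the distance-decreasing property applied to $\pi_1$ and to $f$, with the rotation $\pi_1\circ f$ providing the outer equalities, and then concludes the lemma ``directly'' by the same pinching. Your write-up merely makes explicit the sandwich step that the paper leaves implicit.
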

In particular we consider, as in \cite{pa}, the disc $f^t(\zeta)=(\zeta,t\zeta^2)$ for $t\geq 0$ small enough. Then the disc $f^t$ is a (infinitesimal) complex geodesic of $\Omega$.  We also note that, due to Lemma~\ref{lemgeo}, we have 
\begin{lem}\label{leminside}
Any holomorphic disc $f:\Delta \to \C^2$ of the form
$$
f(\zeta)=(e^{i\theta}\zeta,e^{2i\theta}\zeta(a_1+a_2\zeta+\overline{a_1}\zeta^2)),
$$
where $a_1 \in \C$, $a_2 \in \R, \theta \in [0,2\pi)$ are such that $\displaystyle 2|a_1|+|a_2|<\frac{1}{4(1+\varepsilon)^3}$ ,  is a (infinitesimal) complex geodesic of $\Omega$.
\end{lem}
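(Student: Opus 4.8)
The plan is to deduce the statement from Lemma~\ref{lemgeo}. Since $f$ already has the form $(e^{i\theta}\zeta,f_2(\zeta))$ with $f_2(\zeta)=e^{2i\theta}\zeta P(\zeta)$ and $P(\zeta):=a_1+a_2\zeta+\overline{a_1}\zeta^2$, the only thing to check is that $f$ actually maps $\Delta$ into $\Omega$; once this is established, Lemma~\ref{lemgeo} immediately gives that $f$ is a (infinitesimal) complex geodesic of $\Omega$.

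First I would verify the two ``box'' constraints defining $\Delta_{1+\varepsilon}\times\Delta_{\frac{1}{4(1+\varepsilon)^3}}$. The first is immediate, since $|e^{i\theta}\zeta|=|\zeta|<1<1+\varepsilon$. For the second, the triangle inequality together with $|\zeta|<1$ gives $|f_2(\zeta)|=|\zeta|\,|P(\zeta)|<2|a_1|+|a_2|<\frac{1}{4(1+\varepsilon)^3}$, so $f(\Delta)$ lies in the polydisc, and in particular in the region where observation (i) guarantees that $\rho$ is strictly plurisubharmonic.

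It then remains to show $\rho(f(\zeta))<0$ for $\zeta\in\Delta$. Here the key algebraic point is that the factors $e^{i\theta}$ cancel in the term $\bar z^4w^2$: a direct computation yields $\bar z^4w^2=|\zeta|^4\bar\zeta^2P(\zeta)^2$, so that
$$\rho(f(\zeta))=|\zeta|^2-1+|\zeta|^2|P(\zeta)|^2-|\zeta|^4\,\Re e\!\left(\bar\zeta^2P(\zeta)^2\right).$$
The heart of the argument --- and the step I expect to be the main obstacle --- is to evaluate this expression on $\partial\Delta$. The particular shape of $P$ (the coefficient of $\zeta^2$ being $\overline{a_1}$, and $a_2$ being real) is engineered precisely so that on $|\zeta|=1$ one has $\bar\zeta P(\zeta)=a_2+2\,\Re e(a_1\bar\zeta)\in\R$. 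Consequently $\bar\zeta^2P(\zeta)^2=(\bar\zeta P(\zeta))^2=|P(\zeta)|^2$ there, and the expression above collapses to $\rho(f(\zeta))=1+|P(\zeta)|^2-|P(\zeta)|^2-1=0$. Thus $f$ is attached to $\{\rho=0\}$, and recognizing this reality phenomenon is really the crux of the proof.

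Finally I would invoke the maximum principle. Since $\rho$ is strictly plurisubharmonic on the polydisc and $f$ is holomorphic with values there, the function $\zeta\mapsto\rho(f(\zeta))$ is subharmonic on $\Delta$ and continuous on $\overline\Delta$, with boundary values identically $0$. As $\rho(f(0))=\rho(0,0)=-1<0$, this function is nonconstant, so by the strong maximum principle it cannot attain its maximum value $0$ at an interior point; hence $\rho(f(\zeta))<0$ for all $\zeta\in\Delta$. Therefore $f(\Delta)\subset\Omega$, and Lemma~\ref{lemgeo} applies to conclude.
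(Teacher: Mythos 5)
Your proof is correct and takes essentially the same route as the paper: both arguments reduce to showing $f(\Delta)\subset\Omega$ by computing that $\rho\circ f$ vanishes identically on $\partial\Delta$ (the paper's boundary identity is exactly your reality observation that $\bar\zeta P(\zeta)=a_2+2\Re e(a_1\bar\zeta)\in\R$ for $|\zeta|=1$, left implicit there) and then applying the maximum principle before invoking Lemma~\ref{lemgeo}. If anything, your write-up is more careful than the paper's, which only says ``provided that $a_1$ and $a_2$ are small enough'': you verify explicitly that $2|a_1|+|a_2|<\frac{1}{4(1+\varepsilon)^3}$ keeps $f(\Delta)$ inside the polydisc where $\rho$ is strictly plurisubharmonic, which is what legitimizes the subharmonicity of $\rho\circ f$ and the strict inequality in the interior.
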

\begin{proof}
We only need to prove that $f(\Delta) \subset \Omega$ when $a_1 \in \C$ and $a_2 \in \R$ are small enough. For $\zeta \in \partial \Delta$, we have  
$$
\begin{array}{lll}
\rho(f(\zeta))& = &|\zeta|^2+|\zeta(a_1+a_2\zeta+\overline{a_1}\zeta^2)|^2-\Re e \left(\bar \zeta ^4\zeta^2(a_1+a_2\zeta+\overline{a_1}\zeta^2)^2\right)-1\\
\\
                    & = & |a_1+a_2\zeta+\overline{a_1}\zeta^2|^2-\Re e \left(\bar \zeta ^2(a_1+a_2\zeta+\overline{a_1}\zeta^2)^2\right) =0.\\
\end{array}
$$
By the maximum principle, $f(\Delta) \subset \Omega$ provided that $a_1$ and $a_2$ are small enough.
\end{proof}
Our main result is the following
\begin{theo}\label{theomain}  Fix $z_0 \in \Delta\setminus\{0\}$ and write $\displaystyle x_0=e^{-i\theta_0}z_0 \in (0,1)$, $\theta_0 \in [0,2\pi)$.

\begin{enumerate}[(i)]
\item Let $z_1 \in \C$ be such that 
 $\displaystyle |z_1|< \frac{|z_0|^2}{4(1+\varepsilon)^3}$. Complex geodesics $f$ of $\Omega$ such that $f(0)=(0,0)$ and $f(r_0)=(z_0,z_1)$ for some $0<r_0<1$  
 are exactly of the form 
\begin{equation}\label{discform}
\displaystyle f(\zeta)=\left(e^{i\theta_0}\zeta,  e^{i2\theta_0}\zeta\left(x_0\left(-b+\overline{b}x_0^2\right)+
\left((1-x_0^4)b+\frac{z_1}{z_0^2}\right)\zeta+x_0\left(-\overline{b}+bx_0^2\right)\zeta^2\right)\right)
\end{equation}
for some $b \in \Delta_{\varepsilon_1}$ where 
$ \displaystyle \varepsilon_1=\frac{1}{5}\left(\frac{1}{4(1+\varepsilon)^3}-\left|\frac{z_1}{z_0^2}\right|\right)>0$.  In particular $r_0=x_0$.  
\item  The set of complex geodesics $f$ contained in $\Omega$ such that $f(0)=(0,0)$ and such that $f(x_0)=(z_0,z_1)$, with  
$\displaystyle |z_1|< \frac{|z_0|^2}{4(1+\varepsilon)^3}$, forms a  smooth real manifold of dimension three.
\item Let $c \in \R$ be such that $\displaystyle 0\leq c <\frac{1}{16(1+\varepsilon)^3}$. Then any infinitesimal extremal disc for the pair $\left((0,0),(1,c z_0)\right)$ is a (infinitesimal) complex geodesic. 
\item Let $z_1 \in \C$ be such that 
 $\displaystyle |z_1|< \frac{|z_0|^2}{4(1+\varepsilon)^3}$. 
Then any extremal disc for the points $(0,0)$ and $(z_0,z_1)$ is a (infinitesimal) complex geodesic. 
\end{enumerate}
\end{theo}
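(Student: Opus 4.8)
The plan is to reduce each statement to the first coordinate via the retraction $\pi_1$ and the Schwarz lemma, the only substantial analytic input being a completed-square computation of $\rho$ along the disc. For (i) I would first use that a complex geodesic forces equality in \eqref{eqgeo}, so that $d_\Delta(\pi_1(f(\zeta)),\pi_1(f(\zeta')))=d_\Delta(\zeta,\zeta')$ and $f_1:=\pi_1\circ f$ is an isometry of $(\Delta,d_\Delta)$ fixing the origin, hence a rotation $f_1(\zeta)=e^{i\theta_0}\zeta$; matching $f_1(r_0)=z_0=e^{i\theta_0}x_0$ with $r_0,x_0\in(0,1)$ gives $r_0=x_0$. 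Writing $g:=e^{-2i\theta_0}f_2$, the key computation is the boundary identity, valid for $\zeta\in\partial\Delta$,
\[
\rho(f(\zeta))=|g(\zeta)|^2-\Re e\bigl(\bar\zeta^4 g(\zeta)^2\bigr)=\tfrac12\bigl|\bar\zeta^2 g(\zeta)-\zeta^2\overline{g(\zeta)}\bigr|^2\ge 0 .
\]
Since $f(\Delta)\subset\Omega$ forces $\rho\circ f<0$ in $\Delta$, and $f_2$ is bounded so admits radial limits a.e.\ by Fatou, the radial boundary limits of $\rho\circ f$ are simultaneously $\le 0$ (limits of negative values) and equal to the nonnegative right-hand side; hence $\rho\circ f=0$ a.e.\ on $\partial\Delta$, i.e.\ $\bar\zeta^2 g=\zeta^2\overline g$. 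Expanding $g=\sum_{n\ge 1}c_n\zeta^n$ and matching Fourier coefficients annihilates all $c_n$ with $n\ge 4$ and imposes $c_3=\overline{c_1}$ and $c_2\in\R$, so $f_2(\zeta)=e^{2i\theta_0}\zeta(c_1+c_2\zeta+\overline{c_1}\zeta^2)$. I would then put $f$ in the form \eqref{discform} through the substitution $c_1=x_0(-b+\overline b x_0^2)$, $c_2=(1-x_0^4)b+z_1/z_0^2$, which simultaneously realises $f_2(x_0)=z_1$; the reality of $c_2$ pins $\Im b$, while $|b|<\varepsilon_1$ yields $2|c_1|+|c_2|<\tfrac1{4(1+\varepsilon)^3}$ so that Lemma~\ref{leminside} guarantees $f(\Delta)\subset\Omega$.

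For (ii) I would use that (i) identifies each such geodesic with a pair $(c_1,c_2)\in\C\times\R$ via $f_2(\zeta)=e^{2i\theta_0}\zeta(c_1+c_2\zeta+\overline{c_1}\zeta^2)$, with $z_1=f_2(x_0)$ depending linearly on $(c_1,c_2)$; the two open conditions $f(\Delta)\subset\Omega$ and $|z_1|<\tfrac{|z_0|^2}{4(1+\varepsilon)^3}$ then carve out an open subset of $\C\times\R\cong\R^3$, and the linear map $(c_1,c_2)\mapsto f$ embeds it into the finite-dimensional space of cubic discs, exhibiting the family as a smooth real three-manifold.

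Parts (iii) and (iv) I would settle by the same Schwarz scheme on $f_1$. In (iv), $f(0)=(0,0)$ and $f(\zeta')=(z_0,z_1)$ give $f_1(0)=0$, $f_1(\zeta')=z_0$, so Schwarz yields $|\zeta'|\ge|z_0|=x_0$, while (i) supplies a geodesic attaining $\zeta'=x_0$; hence the extremal value is $x_0$, and for an extremal disc the equality case of Schwarz forces $f_1$ to be a rotation, so Lemma~\ref{lemgeo} makes $f$ a complex geodesic. In (iii), any competitor with $f'(0)=\lambda(1,cz_0)$ has $f_1'(0)=\lambda\le 1$ by Schwarz, whereas the disc $f(\zeta)=(\zeta,\,cz_0\zeta+c\overline{z_0}\zeta^3)$ furnished by Lemma~\ref{leminside} realises $\lambda=1$ as soon as $2c|z_0|<\tfrac1{4(1+\varepsilon)^3}$, which is guaranteed by $c<\tfrac1{16(1+\varepsilon)^3}$; extremality then forces $f_1(\zeta)=\zeta$ and Lemma~\ref{lemgeo} again applies.

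I expect the main obstacle to be the boundary step in (i): justifying rigorously the passage from the strict interior inequality $\rho\circ f<0$ to the almost-everywhere boundary identity $\rho\circ f=0$ for a merely bounded $f_2$—so that the Fourier argument isolating the cubic is legitimate—together with the verification that $f(\overline\Delta)$ remains in the region $\{|z|<1+\varepsilon,\ |w|<\tfrac1{4(1+\varepsilon)^3}\}$ where $\rho$ is strictly plurisubharmonic and $\rho\circ f$ is therefore subharmonic.
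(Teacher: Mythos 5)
Your overall architecture is the paper's: reduce to the first component via $\pi_1$ and the Schwarz lemma, then classify $f_2$ by a boundary identity and Fourier truncation. But the very first step of your proof of (i) has a genuine gap: you assert that ``a complex geodesic forces equality in \eqref{eqgeo}'', so that $d_\Delta(\pi_1(f(\zeta)),\pi_1(f(\zeta')))=d_\Delta(\zeta,\zeta')$ and $f_1$ is an isometry. Equation \eqref{eqgeo} is established in the paper only for discs whose first component is \emph{already} a rotation; for a general geodesic $f$ the decreasing property gives only $d_\Delta(f_1(\zeta),f_1(\zeta'))\le d_\Omega(f(\zeta),f(\zeta'))=d_\Delta(\zeta,\zeta')$, and nothing upgrades the left inequality to an equality. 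Indeed the implication ``geodesic through the origin $\Rightarrow$ $f_1$ isometric'' is false in this very domain: the disc $h(\zeta)=\bigl(0,\tfrac{\zeta}{4(1+\varepsilon)^3}\bigr)$ maps $\Delta$ into $\Omega$, satisfies $h(0)=(0,0)$, and is a complex geodesic of $\Omega$ because $(z,w)\mapsto 4(1+\varepsilon)^3 w$ is a holomorphic map $\Omega\to\Delta$ left-inverting it, yet $\pi_1\circ h\equiv 0$. So the second interpolation condition $f(r_0)=(z_0,z_1)$ with $z_0\neq 0$ must be used \emph{before}, not after, concluding that $f_1$ is a rotation. The paper's repair is a two-sided comparison at one pair of points: $d_\Delta(0,r_0)=d_\Omega((0,0),(z_0,z_1))$ is bounded below by $d_\Delta(0,x_0)$ via $\pi_1$, and above by $d_\Delta(0,x_0)$ because an explicit disc of the form \eqref{discform} (from Lemma~\ref{leminside}, with admissible $b$) joins $(0,0)$ to $(z_0,z_1)$ at $x_0$; hence $r_0=x_0$, $|f_1(r_0)|=|z_0|=r_0$, and the equality case of the Schwarz--Pick lemma at this single pair makes $f_1$ a rotation. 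The existence of that comparison disc inside $\Omega$ is exactly where the hypothesis $|z_1|<|z_0|^2/(4(1+\varepsilon)^3)$ enters, and it is the ingredient missing from your opening step.

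Once $f_1(\zeta)=e^{i\theta_0}\zeta$ is secured, the rest of your argument agrees with the paper and is correct, in places tighter. Your completed square $\tfrac12\bigl|\overline{\zeta}^2 g-\zeta^2\overline{g}\bigr|^2=|g|^2-\Re e\bigl(\overline{\zeta}^4 g^2\bigr)$ is precisely the paper's observation that $|g|^2\le \Re e\, g^2$ on $\partial\Delta$ forces $g$ real there, and your Fatou/almost-everywhere treatment of the boundary values legitimately patches the paper's informal ``as $|\zeta|$ tends to $1$'' step (an a.e.\ identity suffices for the Fourier truncation since $f_2\in H^\infty$); the subharmonicity worry you raise is not needed, radial limits alone do the job. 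Your $(c_1,c_2)\in\C\times\R$ chart for (ii) is a linear reparametrization of the paper's $(z_1,b)\in U_{z_0}$ with its explicit left inverse $\mathcal H$, and yields the same three-manifold. In (iii) your competitor $(\zeta,\,cz_0\zeta+c\overline{z_0}\zeta^3)$, i.e.\ $a_1=cz_0$, $a_2=0$ in Lemma~\ref{leminside}, is simpler than the paper's $(\zeta,\,c\zeta(z_0-(1+|z_0|^2)\zeta+\overline{z_0}\zeta^2))$ and works under the stated bound on $c$ since $2c|z_0|<\tfrac1{8(1+\varepsilon)^3}$. Your (iv) matches the paper verbatim --- and note that there, too, the Schwarz argument is powered by the existence of the comparison disc through both points, the same ingredient whose absence creates the gap at the start of your (i).
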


\begin{proof}
We start with Point $(i)$. First note that due to Lemma \ref{leminside}, any disc of the form $(\ref{discform})$ is a complex geodesics of $\Omega$. Moreover, it is immediate that for such discs we have $f(0)=(0,0)$ and $f(x_0)=(z_0,z_1)$.  Conversely, 
let $f=(f_1,f_2)$ be a  complex geodesic with $f(0)=(0,0)$ and $f(r_0)=(z_0,z_1)$ for some $0<r_0<1$ . 
We have:  
$$
d_{\Delta}(0,r_0)=  d_{\Omega}(f(0),f(r_0)) = d_{\Omega}((0,0),(z_0,z_1)) = d_{\Omega}(\varphi(0),\varphi(x_0))$$
where $\varphi: \Delta \to \Omega$ is any complex geodesic of the form   $(\ref{discform})$. Hence
$$ d_{\Delta}(0,r_0) =  d_{\Delta}(0,x_0)=d_{\Delta}(0,z_0)=d_{\Delta}(0,f_1(r_0)).$$
 It follows that $f_1:\Delta \to \Delta$ is a complex geodesic of the Poincar\'e disc given by $f_1(\zeta)=e^{i\theta}\zeta$ for some 
$\theta \in [0,2\pi)$. Note that since $f_1(r_0)=z_0$, we have $\theta=\theta_0$ and $r_0=x_0$.     
In order to determine $f_2$, we write for $\zeta \in \Delta$ 
$$|f_1|^2+|f_2|^2-\Re e \left(\bar f_1 ^4f_2^2\right)-1=|\zeta |^2+|f_2|^2-\Re e \left(\bar \zeta ^4(e^{-i2\theta_0}f_2)^2\right)-1 < 0.$$
As $|\zeta|$ tends to $1$, we obtain:
 $$|f_2(\zeta)|^2 \leq \Re e \left(\bar \zeta ^4(e^{-i2\theta_0}f_2(\zeta))^2\right),\ {\rm for \ every}\ \zeta \in \partial \Delta.$$
Consider the Fourier expansion of $\displaystyle f_2(\zeta)=\sum_{n \geq 1} a_n\zeta^n=\zeta h(\zeta)$ and set $\displaystyle \tilde{h}=e^{-i2\theta_0}h$. We have   
\begin{equation}\label{eqh}
|\tilde{h}|^2(\zeta) \leq \Re e \left(\overline{\zeta}\tilde{h}(\zeta)\right)^2
\end{equation}
 for $\zeta \in \partial \Delta$. Setting $g(\zeta)=\overline{\zeta}\tilde{h}(\zeta)$, Equation (\ref{eqh}) becomes $\displaystyle |g|^2 \leq \Re e g^2$
 on $\partial \Delta$. This implies that $g(\zeta)$ is a real-valued for $\zeta \in \partial \Delta$ and hence of the form 
 $$g(\zeta)=a_1\overline{\zeta}+a_2+\overline{a_1}\zeta,\ {\rm for \ every}\ \zeta \in \partial \Delta,$$
 with $a_2 \in \R$. Therefore $f_2$ is given for every $\zeta \in \Delta$ by:
$$f_2(\zeta)=e^{i2\theta_0}\zeta(a_1+a_2\zeta+\overline{a_1}\zeta^2).$$
Since  
$$e^{i2\theta_0}x_0(a_1+a_2x_0+\overline{a_1}x_0^2)=z_1$$
a straightforward computation gives  $\displaystyle a_1=x_0\left(-b+\overline{b}x_0^2\right)$  and $\displaystyle a_2=(1-x_0^4)b+\frac{z_1}{z_0^2}$, where $b \in \C$ is small enough.

\vspace{0,3cm}

We now prove Point $(ii)$.
Denote by $\mathcal{G}_0$ the set of complex geodesics of $\Omega$ 
centered at the origin and by $\mathcal{A}$ the set of holomorphic discs in $\C^2$ continuous up to 
$\partial \Delta$. Define the following set
$$U_{z_0}=\left\{(z_1,b) \in \C^2   \ | \  \ 5|b|+\left|\frac{z_1}{z_0^2}\right|<\frac{1}{4(1+\varepsilon)^3} \ \mbox{ and }  \ (1-x_0^4)b+\frac{z_1}{z_0^2} \in \R \right\}.$$
Consider the map $\mathcal{F}: \C^2 \to \mathcal{A}$  
defined by 
$$\mathcal{F}(z_1,b)(\zeta)=\left(e^{i\theta_0}\zeta,  e^{i2\theta_0}\zeta\left(x_0\left(-b+\overline{b}x_0^2\right)+
\left((1-x_0^4)b+\frac{z_1}{z_0^2}\right)\zeta+x_0\left(-\overline{b}+bx_0^2\right)\zeta^2\right)\right).$$
The map $\mathcal{F}$ is smooth, one-to-one and, by Point $(i)$, the set $\mathcal{F}(U_{z_0})$ is included in $\mathcal{G}_0$. Note that the  map $\mathcal{H}:\mathcal{A} \to \C^2$ defined by 
 $$\mathcal{H}(f)=\left(f_2(z_0),\frac{-1}{1-x_o^4}\left(\frac{f_2'(0)e^{-i\theta_0}}{z_0}+\overline{f_2'(0)}e^{i\theta_0}z_0\right)\right)$$
  is  smooth and satisfies $\mathcal{F} \circ \mathcal{H}(f)=f$ for $f \in \mathcal{F}(U_{z_0})$. Moreover for $(z_1,b) \in U_{z_0}$, the differential map $d_{(z_1,b)}\mathcal{F}$ of $\mathcal{F}$ at $(z_1,b)$ is given, for every $(Z_1,B) \in T_{(z_1,b)}U_{z_0}$, by: 
 $$d_{(z_1,b)}\mathcal{F}(Z_1,B)(\zeta)=\left(0,  e^{i2\theta_0}\zeta\left(x_0\left(-B+\overline{B}x_0^2\right)+
\left((1-x_0^4)B+\frac{Z_1}{z_0^2}\right)\zeta+x_0\left(-\overline{B}+Bx_0^2\right)\zeta^2\right)\right).$$
Note that $d_{(z_1,b)}\mathcal{F}$ is one-to-one.  Therefore the map $\mathcal{F}$ is a  smooth diffeomorphism onto its image. It follows that the set $\mathcal{F}(U_{z_0})$ is a  smooth real manifold of dimension three in $\mathcal{G}_0$.
\vspace{0,3cm}

We  prove Point $(iii)$. 
Let $f=(f_1,f_2):\Delta \to \Omega$ be an infinitesimal extremal disc for  the pair  $\left((0,0),(1,c z_0)\right)$ and let $\lambda>0$ be such that $f'(0)=\lambda(1,cz_0)$.   
Consider the disc $g:\Delta \to \Omega$
$$g(\zeta)=\left(\zeta, c \zeta(z_0-(1+|z_0|^2)\zeta+ \overline{z_0}\zeta^2)\right).$$
Note that $g(0)=(0,0)$ and $g'(0)=(1,cz_0)$. Since $f$ is an infinitesimal extremal disc, we have: 
$$1\leq \lambda =f_1'(0).$$
Moreover since $f_1:\Delta \to \Delta$ satisfies $f_1(0)=0$, by the Schwarz Lemma we have $|f_1'(0)|\leq 1$. This proves that $f_1(\zeta)=\zeta$ and therefore, by  
Lemma \ref{lemgeo}, $f$ is a (infinitesimal) complex geodesic. 

\vspace{0,3cm}

Finally, we prove Point $(iv)$. Let $f=(f_1,f_2):\Delta \to \Omega$ be an extremal disc for  the points  $(0,0)$ and $(z_0,z_1)$. Let $\zeta_0 \in \Delta$ be such that $f(\zeta_0)=(z_0,z_1)$.  
Consider an arbitrary disc $g:\Delta \to \Omega$ of the form (\ref{discform}). Since $g(0)=(0,0)$, 
$g(x_0)=(z_0,z_1)$ and since $f$ is an extremal disc we have:
$$|\zeta_0|\leq x_0=|z_0|=|f_1(\zeta_0)|.$$
The Schwarz Lemma implies that $f_1(\zeta)=e^{i\theta}\zeta$ for some $\theta \in \R$, and therefore $f$ is a complex geodesic  by  
Lemma \ref{lemgeo}.

\end{proof}

\begin{rem}
Note that according to Theorem \ref{theomain} Point $(i)$,  there are infinitely many (ranges of) complex geodesics centered at zero and passing through any point of the form 
$(z_0,z_1)$ where $z_0 \in \Delta\setminus\{0\}$ and  $\displaystyle |z_1|< \frac{|z_0|^2}{4(1+\varepsilon)^3}$.
\end{rem}

\section{Stationary discs for deformations of $\Omega$}

Let $\lambda \geq 0$. Following \cite{pa}, we consider the domain $\Omega_\lambda$ defined near $\overline{\Delta} \times \{0\}$  by 
$$
\rho^\lambda=|z|^2+|w|^2-\lambda \Re e \left(\bar z ^4w^2\right)-1<0.
$$
Note that for every $\lambda \geq 0$, the disc $f^0(\zeta)=(\zeta,0)$ is stationary, with lift ${\bm f^0}=(\zeta,0,1,0)$ to the cotangent bundle.  For $\lambda <<1$, $\Omega_{\lambda}$ is a small $\mathcal C^2$ deformation of the unit ball in a neighborhood of $\Delta \times \{0\}$, and $\Omega=\Omega_1$. We recall that in the case of the unit ball, lifts of stationary discs that are close to the lift  
${\bm f^0}(\zeta)=(\zeta,0,1,0)$ of $f^0$ form a smooth manifold of real dimension eight. It might be natural to expect the same result for $\Omega_{\lambda}$ at least for $\lambda < < 1$. The following theorem gives the result for the whole family, except at the domain $\Omega$.
\begin{theo}\label{struct-theo}
Assume that $\lambda \neq 1$. The set of lifts of stationary discs for $\partial \Omega_\lambda$ close to the lift ${\bm f^0}$ forms a  smooth real manifold of dimension eight. 
\end{theo}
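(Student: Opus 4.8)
The plan is to realise the stationary lifts as analytic discs attached to the $\zeta$-dependent, maximal totally real submanifold $\mathcal{N}M(\zeta)\subset\C^4$ of (\ref{eqco}) (with $M=\partial\Omega_\lambda$), and to apply the implicit function theorem in a Banach space of discs of class $\mathcal{C}^{1,\alpha}(\overline{\Delta})$. First I would record that $\mathcal{N}M(\zeta)$ is totally real of maximal real dimension $4$ along ${\bm f^0}$: this rests on the strong pseudoconvexity of $\rho^\lambda$ near $\partial\Delta\times\{0\}$ (on $\{w=0\}$ the Levi form equals $|Z|^2+|W|^2$ for every $\lambda$), so that the attachment problem is elliptic and Fredholm; the passage to the cotangent lift is exactly what turns the conormal constraint $\tilde z\in\zeta N^*_zM$ into such an attachment. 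The unknown is the holomorphic lift ${\bm f}=(f_1,f_2,\tilde f_1,\tilde f_2)$, the boundary equations on $\partial\Delta$ being $\rho^\lambda(f)=0$ together with $\tilde f=\zeta\,c\,\partial\rho^\lambda(f)$ for a real function $c$, and the base point ${\bm f^0}=(\zeta,0,1,0)$ with $c\equiv1$ solves them.

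Next I would linearise at ${\bm f^0}$ to obtain a linear Riemann--Hilbert problem for the holomorphic perturbation $V=(v_1,v_2,\tilde v_1,\tilde v_2)$. Using $\partial\rho^\lambda/\partial z=\bar z-2\lambda z^3\bar w^2$ and $\partial\rho^\lambda/\partial w=\bar w-\lambda\bar z^4 w$ and evaluating along $(\zeta,0)$, the crucial observation is that the problem splits into two independent $2\times2$ blocks. The first, in $(v_1,\tilde v_1)$, reads $\Re e(\bar\zeta v_1)=0$ and $\Im m(\tilde v_1+\bar\zeta v_1)=0$ on $\partial\Delta$; it is independent of $\lambda$, because the $\lambda$-term of $\partial\rho^\lambda/\partial z$ vanishes to second order on $\{w=0\}$. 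The second block, in $(v_2,\tilde v_2)$, carries all the $\lambda$-dependence and reads $\tilde v_2+\lambda\bar\zeta^3v_2-\zeta\bar v_2=0$ on $\partial\Delta$.

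I would then solve each block by expanding in Fourier series and matching coefficients, which amounts to a Birkhoff factorisation and a computation of partial indices. The first block is the classical problem governing automorphisms of the disc: its solution space is $\{v_1=a_0+i\beta\zeta-\overline{a_0}\zeta^2,\ \beta\in\R\}$ together with one real constant of integration for $\tilde v_1$, hence real dimension $4$, its total index equals $2$ with all partial indices $\geq-1$, and the inhomogeneous problem is always solvable. For the second block the matching identities force $v_2$ and $\tilde v_2$ to be polynomials of degree at most $4$ and $1$, and the only delicate identity comes from the $\zeta^{-1}$ Fourier mode, namely $\lambda b_2=\overline{b_2}$ for the coefficient $b_2$ of $\zeta^2$ in $v_2$. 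For $\lambda\neq1$ (and $\lambda\geq0$, so $\lambda\neq-1$) this forces $b_2=0$; the remaining coefficients are freely parametrised by $b_0,b_1\in\C$, the block has real dimension $4$, and the corresponding inhomogeneous equations are uniquely solvable, so again the partial indices are $\geq-1$ and the block operator is onto. Adding the two blocks gives a linearised kernel of real dimension $8$ and a surjective linearisation, whence the implicit function theorem yields that the set of nearby stationary lifts is a smooth real manifold of dimension $8$.

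The main obstacle, and the reason $\lambda=1$ is excluded, is precisely the mode equation $\lambda b_2=\overline{b_2}$: at $\lambda=1$ it degenerates to $b_2=\overline{b_2}$, producing simultaneously an extra kernel element (the real direction $b_2\in\R$, corresponding to Pang's non-isolated geodesics $(\zeta,t\zeta^2)$) and a one-dimensional cokernel (the solvability constraint $\Re e\,\psi_{-1}=0$ on the data). Thus at $\lambda=1$ the linearisation is neither of locally constant kernel dimension nor surjective, and the implicit function theorem no longer applies. The technical heart of the argument is therefore the verification that, away from $\lambda=1$, the $2\times2$ block symbol stays invertible on $\partial\Delta$ so that all partial indices remain $\geq-1$ and the full inhomogeneous Riemann--Hilbert problem is solvable.
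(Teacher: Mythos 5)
Your proposal is correct, and it shares the paper's overall skeleton: realize the lifts as discs attached to the totally real fibration $\mathcal{N}\partial\Omega_\lambda(\zeta)$, linearize the resulting Riemann--Hilbert problem along ${\bm f^0}(\zeta)=(\zeta,0,1,0)$, and exploit the splitting into two $2\times 2$ blocks. Indeed, your boundary equations $\Re e(\overline{\zeta} v_1)=0$, $\Im m(\tilde v_1+\overline{\zeta} v_1)=0$ and $\tilde v_2+\lambda\overline{\zeta}^3 v_2-\zeta\overline{v_2}=0$ are exactly equivalent to the block-diagonal matrix $-\overline{G_1^{-1}}G_1$ that the paper obtains after permuting rows and columns. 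Where you genuinely diverge is in the linear analysis. The paper never solves the linearized problem explicitly: it invokes Globevnik's stability theorem, proving that all partial indices are $\geq -1$ (in fact $\geq 0$) by an indirect argument on the last row of the Birkhoff factorization matrix $\Theta$ satisfying $-\Theta\overline{G_1^{-1}}G_1=\Lambda\overline{\Theta}$, and computing the Maslov index $\kappa=4$ as the winding number of $\det(-\overline{G_1^{-1}}G_1)=\zeta^4$, whence the dimension $\kappa+4=8$. You instead solve both blocks by hand through Fourier matching and then apply the implicit function theorem, in effect reproving the needed special case of Globevnik's theorem. Your computation checks out: the modes $\leq -4$ force $b_n=0$ for $n\geq 5$, so $v_2$ has degree $\leq 4$ and $\tilde v_2$ degree $\leq 1$; the constraints $\overline{b_4}=\lambda b_0$, $\overline{b_3}=\lambda b_1$ leave $(b_0,b_1)\in\C^2$ free; and the single degenerate mode $\lambda b_2=\overline{b_2}$ forces $b_2=0$ precisely when $0\leq\lambda\neq 1$, giving kernel dimension $4+4=8$ and surjectivity. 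Your extra kernel direction $b_2\in\R$ at $\lambda=1$, together with the cokernel condition $\Re e\,\psi_{-1}=0$, is the Fourier-side incarnation of what the paper records in its closing remark, namely that at $\lambda=1$ one partial index drops to $-2$. As for what each approach buys: the paper's route is shorter and applies in situations where one cannot diagonalize or solve explicitly, since the last-row trick estimates $\kappa_4$ without computing the individual indices; your route is more elementary and self-contained, produces the explicit tangent space to the manifold of lifts (consistent with the geodesics of Lemma~\ref{leminside}, which are stationary), and makes the degeneration at $\lambda=1$ quantitative. One small slip of wording: the inhomogeneous equations of the second block are solvable but not \emph{uniquely} solvable (the kernel is four-dimensional); what is uniquely determined are the coefficients in the nondegenerate modes.
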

\begin{proof}
For all $\zeta \in \partial \Delta$ the submanifold $\mathcal{N}\partial \Omega_\lambda(\zeta)$ (see Equation (\ref{eqco})
of real dimension four may be defined near ${\bm f^0}(\zeta)=(\zeta,0,1,0)$ by four real defining functions  
$\tilde{\rho^\lambda}=(\rho_1^\lambda,\cdots,\rho_4^\lambda)$ given explicitly by 
\begin{equation*}
\left\{
\begin{array}{lll} 

\rho_1^\lambda(\zeta)(z,w) & = & |z|^2+|w|^2-\lambda\Re e \left(\bar z ^4w^2\right)-1\\
\\

\rho_2^\lambda(\zeta)(z,w) & = & \displaystyle \frac{i\tilde{z}}{\zeta(\overline{z}-2\lambda z^3\overline{w}^2)}-\frac{i\zeta\overline{\tilde{z}}}
{z-2\lambda \overline{z}^3w^2} \\
\\
\rho_3^\lambda(\zeta)(z,w) & = & \displaystyle \tilde{w} - \frac{\tilde{z}(\overline{w}-\lambda\overline{z}^4w)}{\overline{z}-2\lambda z^3\overline{w}^2}+
\overline{\tilde{w}}-\frac{\overline{\tilde{z}}(w-\lambda z^4\overline{w})}{z-2\lambda\overline{z}^3w^2}
\\
\\
\rho_4^\lambda(\zeta)(z,w) & = & \displaystyle i\tilde{w}-\frac{i\tilde{z}(\overline{w}-\lambda\overline{z}^4w)}{\overline{z}-2\lambda z^3\overline{w}^2}-
i\overline{\tilde{w}}+\frac{i\overline{\tilde{z}}(w-\lambda z^4\overline{w})}{z-2\lambda \overline{z}^3w^2}.\\
\end{array}
\right.
\end{equation*}
It follows that a holomorphic disc $f$ is stationary for $\partial \Omega_\lambda$ and admits a lift ${\bm f}=(f,\tilde{f})$ close to ${\bm f^0}=(\zeta,0,1,0)$ if and only if  for all  $\zeta \in \partial \Delta$
 $$\tilde{\rho^\lambda}(\zeta)(\tilde{f}(\zeta))=0.$$
In order to solve this nonlinear Riemann-Hilbert problem, we need to evaluate the partial indices and Maslov index of the linearized 
problem along the disc  ${\bm f^0}=(\zeta,0,1,0)$, namely of the map $\bm{f} \mapsto 2 \Re e (\overline{G}\bm{f})$, where $G(\zeta)$ is the 
following  $4 \times 4$ invertible matrix
\begin{equation*}
\begin{array}{lll} 
G(\zeta)&=&\left((\rho^\lambda)_{\overline{z}}(\bm{f^0}(\zeta)), (\rho^\lambda)_{\overline{w}}(\bm{f^0}(\zeta)),(\rho^\lambda)_{\overline{\tilde{z}}}(\bm{f^0}(\zeta)),
(\rho^\lambda)_{\overline{\tilde{w}}}(\bm{f^0}(\zeta))\right)\\
\\
&=&\left(\begin{matrix}
\zeta   &  0 &  0 &  0 \\
\\ \displaystyle
-i\zeta  & 0  &     -i & 0 \\
\\
0 &    \displaystyle-\zeta+\lambda \zeta^3 &  0 & 1\\
\\
0  &    \displaystyle- i\zeta-i\lambda \zeta^3 &    0 & -i\\
\end{matrix}\right).
\end{array}
\end{equation*}
According to J.Globevnik \cite{gl2}, we need to prove that the partial indices of $-\overline{G^{-1}}G$ are greater than or equal to $-1$.   Since 
permutations of  rows and columns change neither the partial indices nor the Maslov index, we  work with the following matrix
$$\displaystyle  G_1(\zeta)=\left(\begin{matrix}
 -i   & -i\zeta &  0   & 0 \\
\\
0  &  \zeta &  0&  0 \\
\\
0 &    0 &  -\zeta+\lambda\zeta^3 & 1\\
\\
0  &   0 &     - i\zeta-i\lambda\zeta^3 & -i\\
\end{matrix}\right).
$$
A direct computation gives  
$$\displaystyle  G_1(\zeta)^{-1}=\left(\begin{matrix}
 i   & -1 &  0   & 0 \\
\\
0  & \displaystyle \overline{\zeta} &  0&  0 \\
\\
0 &    0 & \displaystyle  -\frac{\overline{\zeta}}{2}  & \displaystyle \frac{i\overline{\zeta}}{2}\\
\\
0  &   0 &   \displaystyle  \frac{1}{2}+\frac{\lambda \zeta^2}{2} &\displaystyle  \frac{i}{2}-\frac{i\lambda \zeta^2}{2}\\
\end{matrix}\right), \ \ \  \mbox{  } 
\displaystyle \overline{ G_1(\zeta)^{-1}}=\left(\begin{matrix}
- i   & -1 &  0   & 0 \\
\\
0  & \zeta &  0&  0 \\
\\
0 &    0 & \displaystyle  -\frac{\zeta}{2}  & \displaystyle -\frac{i\zeta}{2}\\
\\
0  &   0 &   \displaystyle  \frac{1}{2}+\frac{\lambda\overline{\zeta}^2}{2} &\displaystyle  -\frac{i}{2}+\frac{i\lambda\overline{\zeta}^2}{2}\\
\end{matrix}\right),
$$
and thus 
$$-\overline{G_1(\zeta)^{-1}}G_1(\zeta)=\left(\begin{matrix}
1   & 2\zeta &  0   & 0 \\
\\
0  &  -\zeta^2 &  0&  0 \\
\\
0 &    0 & \lambda \zeta^4 & \zeta \\
\\
0  &   0 &  \displaystyle  \zeta(1-\lambda^2) & \displaystyle -\lambda \overline{\zeta^2}\\
\end{matrix}\right).
$$
Let $\kappa_1 \geq \ldots \geq \kappa_{4}$ be the partial indices of $-\overline {G_1^{-1}} G_1$, and let $\Lambda$ be the diagonal matrix with entries $\zeta^{\kappa_1}, \ldots, \zeta^{\kappa_4}$. According to \cite{ve}, there exists a smooth map $\Theta: \overline{\Delta} \to GL_4(\C)$, where $GL_4(\C)$ denotes the general linear group,  holomorphic on $\Delta$ and such that on $\partial \Delta$ 
$$-\Theta \overline {G_1^{-1}} G_1 = \Lambda \overline \Theta.$$ 
We denoted by $l = (l_1,\ldots, l_4)$ the last row of the matrix map $\Theta$. In particular we obtain  the following system 
\begin{equation*}
\left\{
\begin{array}{lll} 
l_1(\zeta)&=  & \zeta^{\kappa_4} \overline{l_1(\zeta)} \\ 
\\
2\zeta l_1(\zeta)-\zeta^2l_2(\zeta)&=  & \zeta^{\kappa_4} \overline{l_2(\zeta)} \\ 
\\
\lambda \zeta^4 l_3(\zeta) + \zeta(1-\lambda^2)l_4(\zeta)&=  & \zeta^{\kappa_4} \overline{l_3(\zeta)} \\ 
\\
\zeta l_3(\zeta) + \lambda\overline{\zeta^2}l_4(\zeta)&=  & \zeta^{\kappa_4} \overline{l_4(\zeta)} \\ 
\end{array}
\right.
\end{equation*}
In case $l_1\neq 0$ we get $\kappa_4\geq 0$ by holomorphy of $l_1$. If $l_1=0$ the second line leads 
to $\kappa_4\geq 0$ unless $l_2=0$. 
Now if $l_1=l_2= l_3=0$ we also get $l_4=0$ by the third line which contradicts the fact that $\Theta(\zeta)$ is invertible. Finally,  if $l_1=l_2=0$, by 
holomorphy of  $l_3$ and $l_4$, the third lines leads to $\kappa_4\geq 0$. This proves that all partial indices are nonnegative.  

Moreover the Maslov index $\kappa=\kappa_1+\cdots+\kappa_4$ is the winding number of $\det (-\overline{G_1(\zeta)^{-1}}G_1(\zeta))$, that is 
$$
\kappa=\frac{1}{2i\pi}\int_{b\Delta}\frac{\left(\det (-\overline{G_1(\zeta)^{-1}}G_1(\zeta))\right)'}{\det (-\overline{G_1(\zeta)^{-1}}G_1(\zeta) (\zeta))}{\rm d}\zeta=4.
$$

Finally, this  proves that the set of lifts of stationary discs for $\partial \Omega_\lambda$ close to the lift ${\bm f^0}$ forms a  smooth real manifold of dimension $\kappa+\dim_{\C}\C^4=8$. 
\end{proof}

\begin{rem}
The previous proof illustrates a discontinuous behaviour of (lifts of) stationary discs at $\lambda=1$. More precisely, the previous method fails for $\lambda=1$ since, in that case, one of the partial indices is $-2$. Note that M.-Y.Pang \cite{pa} already showed a discontinuous behaviour of extremal discs at $\lambda=1$.  Moreover, a direct computation shows that geodesics of Lemma \ref{leminside} are stationary for $\partial \Omega$. It follows that the set of stationary discs for $\lambda=1$ centered at the origin forms a variety of real dimension at least four. Recall that in the case of the unit ball, and so for $\lambda <<1$,  such discs form a variety of real dimension three.   
\end{rem}

\vskip 0,5cm
{\small
\noindent Florian Bertrand\\
Department of Mathematics,\\
American University of Beirut, Beirut, Lebanon\\
{\sl E-mail address}: fb31@aub.edu.lb\\
\\
Herv\'e Gaussier\\
Univ. Grenoble Alpes, CNRS, IF, F-38000 Grenoble, France\\
{\sl E-mail address}: herve.gaussier@univ-grenoble-alpes.fr\\
}

\end{document}